\newcommand{\zz}{{\Bbb Z}}
\newcommand{\pp}{{\Bbb P}}
\newcommand{\ff}{{\Bbb F}}
\newcommand{\ddeg}{\operatorname{deg}}
\newcommand{\kker}{\operatorname{Ker}}
\newcommand{\op}[1]{\operatorname{#1}}
\newcommand{\ffi}{\varphi}
\newcommand{\eps}{\varepsilon}
\newcommand{\la}{\langle}
\newcommand{\ra}{\rangle}
\newcommand{\lva}{\langle\!\langle}   % @!=1/10*\!   negative space
\newcommand{\rva}{\rangle\!\rangle}   % @,=1/10*\,   positive space
\newcommand{\row}{\rightarrow}
\newcommand{\llow}{\longleftarrow}
\newcommand{\lrow}{\longrightarrow}
\renewcommand{\geq}{\geqslant}
\newcommand{\nichego}[1]{}
\newcommand{\wt}[1]{\widetilde{#1}}
\newcommand{\dmkF}[1]{\op{DM}(k;#1)}
\newcommand{\CH}{\op{CH}}
\newcommand{\Qed}{\hfill$\square$\smallskip}
\newenvironment{proof}{\noindent{\it Proof}:}{\vskip 5mm}
\newtheorem{prop}{Proposition}[section]{\bf}{\it}
\newtheorem{thm}[prop]{Theorem}{\bf}{\it}
{\bf}{\it}
{\bf}{\it}
\newtheorem{defi}[prop]{Definition}{\bf}{\it}
{\bf}{\it}
{\bf}{\it}
\newtheorem{exa}[prop]{Example}{\bf}{\it}
\newtheorem{rem}[prop]{Remark}{\bf}{}
{\bf}{\it}
\newtheorem{claim}[prop]{Claim}{\bf}{\it}
\newtheorem{cor}[prop]{Corollary}{\bf}{\it}
{\bf}{\it}
\begin{document}

\title{Varieties and pure symbols}
\author{Alexander Vishik\footnote{School of Mathematical Sciences, University
of Nottingham}}
\date{}
\maketitle

\begin{abstract}
In this article, we prove that the $2$-isotropy of any projective variety is controlled by a pure symbol in $K^M_*/2$ over the flexible closure of the base field. We also show that such pure symbols
control the $2$-equivalence of field extensions as well as the numerical equivalence of algebraic cycles (with $\ff_2$-coefficients). 
\end{abstract}

\section{Introduction}

The essential ingredient of the proof of all cases of Milnor and Bloch-Kato conjectures starting from the foundational 
work of Merkurjev \cite{MeNRS2} to the final solution by 
Voevodsky and Rost \cite{VoMil},\cite{VoZl},\cite{RoNVAC} was the use of norm-varieties for pure symbols in Milnor's K-theory mod $p$. Such a
variety controls the triviality of a particular symbol
and the passage from the base field to the generic point of it annihilates the symbol in the gentlest possible way.
Thus, one may reduce the problem to the easy case where Milnor's K-theory is $p$-divisible, if one has enough control over the above passage. 
For $p=2$, Pfister quadrics are norm-varieties. For odd primes, such varieties were constructed by Rost \cite{RoNVAC}. The main property of these is that, over all extensions $L/k$ of the base field, the $p$-isotropy of the variety over $L$ is equivalent to the triviality
of the symbol restricted to $L$.

It appears that an arbitrary projective variety is a norm-variety for an appropriate pure symbol in $K^M_*/2$. The only thing, this symbol is defined not over the ground field, but over some purely transcendental extension of it. This is our main result - Theorem \ref{Main}.

\begin{thm}
 Let $k$ be a field of $char\neq 2$, and $X$ be some projective variety over $k$. Then there exists a purely
 transcendental extension $k(A)/k$ and a pure symbol
 $\alpha\in K^M_*(k(A))/2$, such that, for any $L/k$,
 $$
 \alpha|_{L(A)}=0\hspace{3mm}\Leftrightarrow\hspace{3mm}
 X_L\,\,\text{is}\,\,\,2-\text{isotropic}.
 $$
\end{thm}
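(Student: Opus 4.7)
My approach would proceed by a two-stage reduction. First, I would reduce $X$ to a zero-dimensional scheme over $k(A)$ by cutting with a universal family of hyperplanes; second, I would extract the desired pure symbol from the resulting finite algebra via trace-form considerations and the Milnor conjecture.

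For the first stage, embed $X \subset \pp^N$, set $d = \ddim X$, and let $A$ parameterize $d$-tuples of hyperplanes in general position in $\pp^N$ (so $k(A)$ is purely transcendental over $k$). Consider $Z := X \cap H_1 \cap \cdots \cap H_d$ as a zero-dimensional scheme over $k(A)$, of degree $\ddeg X$. The plan is to show that, for any $L/k$, the variety $X_L$ is $2$-isotropic if and only if $Z_{L(A)}$ is. One direction (from $Z$ to $X$) should follow by specialization of the hyperplane parameters, carrying an odd-degree closed point of $Z_{L(A)}$ to a zero-cycle of odd degree on $X_L$, using that specialization along a rational base preserves odd-degree classes in $\op{CH}_0$. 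The other direction (from $X$ to $Z$) is more subtle: given an odd-degree closed point $x \in X_L$, one must produce an odd-degree closed point of the universal zero-dimensional section $Z_{L(A)}$; this should exploit the fact that the hyperplanes passing through $x$ form a large linear subvariety of $A$, allowing the transfer of $x$ to a point of $Z$ after a further generic specialization.

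For the second stage, given a zero-dimensional scheme $Z = \op{Spec} E$ of degree $e$ over $F := k(A)$, the goal becomes constructing $\alpha \in K^M_*(F)/2$ whose vanishing over $L(A)$ is equivalent to the existence of an odd-degree factor of $E \otimes_F L(A)$. When $e$ is odd this is automatic and $\alpha = 0$ suffices. Otherwise, the natural candidate is the pure symbol attached, via the Milnor conjecture \cite{VoMil}, to a Pfister form extracted from the trace form $T_{E/F}$: roughly, a Pfister form whose isotropy precisely detects the appearance of an odd-dimensional orthogonal summand in $T_{E/F}$ after extension of scalars. The Milnor conjecture then converts this Pfister form into a pure symbol in $K^M_*(F)/2$ with the desired vanishing behavior.

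The main obstacle, in my view, lies in the $(\Rightarrow)$ direction of the first reduction --- faithfully transferring an odd-degree point of $X_L$ into one of a universal generic linear section. This step probably requires either further enlargement of $A$ (consistent with the abstract's appeal to the flexible closure) or a more refined replacement of the generic-hyperplane-section construction by one that produces Pfister-type data directly from $X$, perhaps via a parameter space of generic zero-cycles on $X$ rather than generic linear sections of $\pp^N$.
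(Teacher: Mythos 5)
There is a genuine gap, and in fact your first reduction is false as stated, not merely delicate. The implication ``$X_L$ $2$-isotropic $\Rightarrow$ $Z_{L(A)}$ $2$-isotropic'' fails already for a smooth conic: take $X\subset\pp^2$ the conic $xy=z^2$ and $L$ any field over which it is split. Parametrizing $X_L$ by $(s^2:t^2:st)$, the generic line $ax+by+cz=0$ cuts out the binary form $as^2+bt^2+cst$, whose discriminant $c^2-4ab$ is not a square in $L(a,b,c)$; so $Z_{L(A)}=\spec\bigl(L(A)(\sqrt{c^2-4ab})\bigr)$ is a single anisotropic point of degree $2$, although $X_L\cong\pp^1_L$ has rational points. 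More generally, the generic zero-dimensional linear section of a variety of degree $e$ is an irreducible point of degree $e$ (full symmetric monodromy / uniform position), so for even $e$ your $Z_{L(A)}$ is never $2$-isotropic, for any $L$. The structural reason is that cutting down goes the wrong way: since $Z\subset X$, isotropy transfers from $Z$ to $X$ but not conversely, and no further enlargement of the parameter space $A$ can repair this, because the failure occurs over every $L$, including those where $X_L$ is split. Your second stage is also unsubstantiated: the trace form of an \'etale algebra sees only low Stiefel--Whitney data and does not detect the acquisition of odd-degree factors under extension of scalars, and no actual Pfister form is constructed; moreover the appeal to the Milnor conjecture is unnecessary for producing a symbol from a Pfister form.

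The paper avoids both problems by passing \emph{up} rather than down. Using a Veronese embedding, $X$ is presented as the base locus of a linear system of quadrics in $\pp^N$ parametrized by $\pp^M$, and the theorem of Colliot-Th\'el\`ene and Levine (Theorem \ref{CT-L-thm}) says that the generic member $Q_{\eta}$ of this system --- a quadric over $k(\pp^M)$ \emph{containing} $X$ --- is $2$-isotropic if and only if $X$ is; the easy direction is the containment $X\subset Q_{\eta}$, and the hard direction is a degree computation in $\CH_0(Q_{\eta}\setminus X)$ using the projective bundle of pairs $(u,H)$ with $u\in H$, where the generator $l_1$ of $\CH_1(\pp^N\setminus X)$ maps to a class of even degree. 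This reduces an arbitrary projective $X$ to a smooth quadric over a purely transcendental extension. The second step then reduces that quadric to a Pfister quadric by an inductive tower of purely transcendental extensions, using the Pfister Representation Theorem to trade ``$q_{i+1}$ isotropic'' for the isotropy of a neighbour of the Pfister form $q_{\beta_i\cdot\{p(X)\}}$, and the symbol $\alpha$ is read off from the resulting Pfister form directly, since Pfister quadrics are norm-varieties. If you want to salvage your plan, the fix is to replace generic linear sections of $X$ by the generic quadric through $X$, i.e.\ exactly the Colliot-Th\'el\`ene--Levine mechanism.
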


Thus the $2$-isotropy of any given projective variety $X$ is controlled by a certain pure symbol over the {\it flexible closure} of the base field.

This implies that such symbols also control the 2-equivalence of field extensions - Proposition \ref{two-equiv-ker}. Observe, that the $2$-equivalence classes of field extension (for $char(k)=0$)
describe {\it isotropic points} of characteristic $2$ of the Balmer spectrum \cite{Bal-1} of Voevodsky motives - see \cite[Theorem 5.13]{INCHKm}. The above permits to show that all these points are
closed - see \cite[Theorem 3.7]{BSPS}.
Finally, combining our main theorem with the main result of \cite{INCHKm}, we obtain that the numerical triviality of algebraic cycles is also controlled by pure symbols over the flexible closure of $k$ - see Theorem \ref{num-triv-symb} and Corollary \ref{Chow-num-triv-symb}.

We deduce our main result from the Theorem of Colliot-Th\'el\`ene and Levine - Theorem \ref{CT-L-thm} claiming that the isotropy of the generic representative of a linear system of quadrics is equivalent to the isotropy of the base set of this linear system. This permits to reduce the case of an arbitrary projective variety to that of a smooth quadric. The case of a quadric was known from \cite{kerM},
where it was proven using the algebraic theory of quadratic forms. We, instead, deduce it from the
same Theorem of Colliot-Th\'el\`ene and Levine, by deducing the Pfister Representation Theorem - see Theorem \ref{PRT}, which was the main tool in the algebraic arguments of \cite{kerM}.

The paper is organised as follows. In Section \ref{two} we prove our Main Theorem. To make the arguments self-contained, we provide a complete proof here, including an alternative proof of
Theorem \ref{CT-L-thm}. In Section \ref{three} we deduce the Pfister Representation Theorem from
that of Colliot-Th\'el\`ene and Levine. Finally, in Section \ref{four} we discuss the applications to the 2-equivalence of field extensions and the numerical equivalence of cycles.

\section{The main theorem}
\label{two}

\begin{defi}
 Let $\alpha\in K^M_n(k)/p$ and $X$ - a variety over $k$.
 We call $X$ a {\it norm-variety} for $\alpha$, if for any 
 field extension $L/k$, 
 $
 \alpha|_L=0\hspace{3mm}\Leftrightarrow\hspace{3mm}
 X_L\,\,\text{is}\,\,\,p-\text{isotropic}.
 $
\end{defi}

The norm-varieties are known to exist for pure symbols 
(for any $p$ and $n$), for sums of two symbols mod $2$,
for $n=2$ (and arbitrary $p$ and $\alpha$), and in few 
other cases. The existence of such varieties has useful consequences for Milnor's K-theory/Galois cohomology.

\begin{exa}
 Let $char(k)\neq 2$ and $\alpha=\{a,b\}\in K^M_*(k)/2$
 be a pure symbol of $\ddeg=2$ (mod $2$). Then the conic
 $C_{\alpha}$ defined by the form $\la 1,-a,-b\ra$ is a
 norm-variety for $\alpha$.
\end{exa}

While norm-varieties control the triviality of the respective elements of Milnor's K-theory mod $p$, 
the elements, in turn, control the $p$-isotropy of
these varieties.
The purpose of this paper is to show that, in fact, the $2$-isotropy of an arbitrary projective variety $X/k$ is
controlled by a certain pure symbol in Milnor's K-theory
mod $2$ of the {\it flexible closure} $\wt{k}=k(\pp^{\infty})$ of $k$. So, in a sense, any projective
variety is a {\it norm-variety}.

\begin{thm}
 \label{Main}
 Let $k$ be a field of $char\neq 2$, and $X$ be some projective variety over $k$. Then there exists a purely
 transcendental extension $k(A)/k$ and a pure symbol
 $\alpha\in K^M_*(k(A))/2$, such that, for any $L/k$,
 $$
 \alpha|_{L(A)}=0\hspace{3mm}\Leftrightarrow\hspace{3mm}
 X_L\,\,\text{is}\,\,\,2-\text{isotropic}.
 $$
\end{thm}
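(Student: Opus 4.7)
The plan is to reduce, via the Colliot-Th\'el\`ene--Levine theorem (Theorem \ref{CT-L-thm}), to the case of a single quadric over $k(A)$, for which the pure-symbol control of 2-isotropy is already known from the algebraic theory of quadratic forms (cf.\ \cite{kerM}; an alternative derivation from Theorem \ref{PRT} is given in Section \ref{three}).

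First, I would re-embed $X$ via a sufficiently ample linear system (for instance, a Veronese re-embedding) so that $X\subset\pp^N$ becomes the scheme-theoretic base locus of the linear system $\cl$ of quadrics through $X$. Let $V$ be the underlying $k$-vector space of $\cl$, set $A:=V$ (an affine space over $k$, so $k(A)/k$ is purely transcendental), and let $Q_\eta$ denote the generic quadric of $\cl$, a quadric over $k(A)$ with base locus $X$.

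The main intermediate step is the equivalence, for every field extension $L/k$, between 2-isotropy of $X_L$ and (2-)isotropy of $Q_\eta$ over $L(A)$. For $(\Rightarrow)$, I would pick an odd-degree extension $L'/L$ over which $X_{L'}$ acquires a rational point; Theorem \ref{CT-L-thm} applied over $L'$ then yields isotropy of $Q_\eta$ over $L'(A)$, and since $L'(A)/L(A)$ is of odd degree, Springer's theorem for the quadric $Q_\eta/L(A)$ descends this to isotropy over $L(A)$. For $(\Leftarrow)$, Theorem \ref{CT-L-thm} over $L$ directly gives isotropy of $X_L$ from isotropy of $Q_\eta/L(A)$; the passage from 2-isotropy to isotropy for the quadric $Q_\eta$ is again Springer.

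It then suffices to apply the pure-symbol description of 2-isotropy for the single quadric $Q_\eta$ over $k(A)$, producing a pure symbol $\alpha\in K^M_*(k(A))/2$ whose vanishing over any $F/k(A)$ is equivalent to 2-isotropy of $Q_{\eta,F}$. Specializing to $F=L(A)$ and chaining the two equivalences gives the theorem. The main obstacle is not the reduction itself, which is formally light once the embedding and Springer's theorem are in place, but the substantive input from quadratic-form theory: the pure-symbol control of 2-isotropy for an arbitrary quadric, for which this paper gives a new proof in Section \ref{three} via the Pfister Representation Theorem (Theorem \ref{PRT}).
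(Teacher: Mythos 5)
Your outline follows the paper's own proof: a Veronese re-embedding to realise $X$ as the base locus of a linear system of quadrics, the Colliot-Th\'el\`ene--Levine theorem to pass to the generic member $Q_\eta$ of that system (your Springer detours are harmless but unnecessary, since Theorem \ref{CT-L-thm} is stated for $2$-isotropy on both sides and applies over every $L/k$), and then the known pure-symbol control of isotropy for a single quadric. The one genuine inaccuracy is in how you invoke that last input: for a general quadric there is \emph{no} pure symbol over its field of definition whose vanishing over all extensions detects isotropy (only Pfister neighbours have this property), so the symbol cannot be taken in $K^M_*(k(A))/2$ with $A$ merely the parameter space of the linear system, as you assert. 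The result you actually need, \cite[Statement 2]{kerM} (Claim \ref{cl-second} of the paper, proved there by an inductive tower of purely transcendental extensions via the Pfister Representation Theorem), produces the symbol only after a \emph{further} purely transcendental extension $k(A)(C)$; the repair is trivial---take the extension in the theorem to be $k(A)(C)$, which the statement of Theorem \ref{Main} permits---but as literally written your intermediate claim is false. A small attribution slip as well: Section \ref{three} derives Theorem \ref{PRT} from Theorem \ref{CT-L-thm}; the pure-symbol statement for quadrics is obtained in Step 2 of the paper's proof of Theorem \ref{Main} using Theorem \ref{PRT}, not in Section \ref{three}.
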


\begin{proof}
 We know that Pfister quadrics are norm-varieties for the respective pure symbols $\alpha=\{a_1,\ldots,a_n\}\in K^M_n(k)/2$. We will reduce to the case of a Pfister quadric in two steps.\\
 
 \noindent
 {\bf \underline{Step 1}:}
 Here we reduce an arbitrary projective variety to a smooth quadric.
 
 \begin{claim}
  \label{cl-first}
  In the assumptions of Theorem \ref{Main}, there exists a purely transcendental extension $k(B)/k$ and a smooth
  projective quadric $Q$ over $k(B)$, such that, for any
  extension $L/k$, $X_L$ is $2$-isotropic\hspace{2mm}
  $\Leftrightarrow$\hspace{2mm} $Q_{L(B)}$ is $2$-isotropic.
 \end{claim}

 \begin{proof}
  Embed $X$ into some projective space $\pp^N$. Applying a Veronese embedding, if needed, we may assume that $X$ is defined in the projective space by quadrics. In other words, $X$ is the base set of some linear system
  of quadrics. 
  
  Now we may use the following result of Colliot-Th\'el\`ene and Levine.
  
  \begin{thm} {\rm (\cite[Theorem 3]{C-TL})}
   \label{CT-L-thm}
   The generic representative of a linear system of quadrics is $2$-isotropic if and only if the base set of it is $2$-isotropic.   
  \end{thm}

  To make the arguments self-contained and transparent, I will provide an alternative proof here (which follows the proof of \cite[Statement 3.1]{Iso}). 
  
  \begin{proof}
   Let $X\subset\pp^N$ be the base set of the linear system of quadrics parametrized by some $\pp^M$.
   Consider $Y=\{(u,H)\subset (\pp^N\backslash X)\times\pp^M\,|\, u\in H\}$. It has natural projections:
   $$
   \pp^N\backslash X\stackrel{\ffi}{\llow} Y
   \stackrel{\psi}{\lrow}\pp^M,
   $$
   where $\ffi$ is a $\pp^{M-1}$-bundle.
   Let $\eta\hookrightarrow\pp^M$ be the generic point and $Y_{\eta}$ be the generic fiber of $\psi$. Then
   $Y_{\eta}=Q_{\eta}\backslash X$, where $Q_{\eta}$
   is the generic representative of our linear system of quadrics. Then, by the Projective Bundle Theorem,
   $\CH^*(Y)=\CH^*(\pp^N\backslash X)[\rho]/(\text{some relation})$, where $\rho=c_1(O(1))$ is the first Chern class of the canonical line bundle of our projective bundle. Note that this canonical bundle $O(1)$ is the 
   restriction of the bundle $O(1)$ from $\pp^M$, so it is trivial when restricted to the generic fiber. We 
   have the natural surjection on Chow groups:
   $\CH^*(Y)\twoheadrightarrow\CH^*(Y_{\eta})$ which maps
   $\rho$ to zero. Thus, we obtain the natural surjection
   $\CH_1(\pp^N\backslash X)\stackrel{f}{\twoheadrightarrow}\CH_0(Y_{\eta})=\CH_0(Q_{\eta}\backslash X)$ (the pull back via $\ffi$ composed
   with the restriction to the generic fiber). 
   Note that the source-group here is cyclic generated
   by the class $l_1$ of a projective line on $\pp^N$.
   If $X$ is isotropic, then so is $Q_{\eta}$ (as it contains the former). If $X$ is anisotropic, then
   the degree map $\ddeg:\CH_0(Q_{\eta}\backslash X)\row\zz/2$ is well-defined and $\ddeg(f(l_1))=\ddeg(l_1\cdot Q)=2=0\in\zz/2$ (where $Q$ is any representative of our linear system). Hence, $Q_{\eta}\backslash X$ is anisotropic $\Rightarrow$ so is $Q_{\eta}$.
   \Qed
  \end{proof}
  
  It remains to set $k(B)=k(\pp^M)$ and $Q=Q_{\eta}$.
  Note that, if $X$ is anisotropic, then $Q$ is smooth, as singular quadrics are isotropic (and if $X$ is isotropic, we may choose any smooth isotropic quadric).
  The Claim is proven.
  \Qed
 \end{proof}
 
 \noindent
 {\bf \underline{Step 2}:}
 Here we reduce an arbitrary smooth quadric to a Pfister one.
 
 \begin{claim}
  \label{cl-second}
  Let $char(F)\neq 2$ and $Q$ be a smooth projective quadric over $F$. Then there exists a purely transcendental extension $F(C)/F$ and a pure symbol
  $\alpha\in K^M_*(F(C))/2$, such that, for any field extension $L/F$, $Q_L$ is $2$-isotropic
  \hspace{2mm}$\Leftrightarrow$\hspace{2mm}
  $\alpha_{L(C)}=0$.  
 \end{claim}

 This is \cite[Statement 2]{kerM}. Again, to make the arguments self-contained and to demonstrate the relation between the methods of Step 1 and Step 2, I will provide the proof.
 
 \begin{proof}
 WLOG we may assume that $Q$ is given by the quadratic
 form $q=\la 1,-a_1,-a_2,\ldots,-a_r\ra$. Let 
 $q_i=\la 1,-a_1,\ldots,-a_i\ra$. We construct a tower
 of purely transcendental extensions
 $$
 F\row F(C_1)\row F(C_2)\row\ldots\row F(C_r)
 $$ 
 with
 pure symbols $\beta_i\in K^M_*(F(C_i))/2$, such that
 $q_i|_L$ is isotropic $\Leftrightarrow$ $\beta_i|_{L(C_i)}=0$.
 
 Start with $F(C_2)=F$ and $\beta_2=\{a_1,a_2\}$ (I leave the case $i=1$ to the reader). It is well-known that the isotropy of $q_2=\la 1,-a_1,-a_2\ra$ is equivalent to that of $q_{\beta_2}=\lva a_1,a_2\rva$ (since the former is a Pfister neighbour of the latter). Also, by construction, $q_2\subset q_{\beta_2}$. It remains to do the step.
 
 {\bf step:} We have embeddings of forms:
 $q_i\subset q_{i+1}$ and $q_i|_{F(C_i)}\subset q_{\beta_i}$. Let $s$ be the orthogonal complement to
 the latter embedding and $p=s\perp\la a_{i+1}\ra$.
 Then $q_{i+1}|_{F(C_i)}=q_{\beta_i}-p$. 
 Let $L/F$ be any field extension. 
 Either $q_{\beta_i}|_{L(C_i)}$ is isotropic, which implies (by inductive assumption) that $q_i|_L$ is isotropic, and so is $q_{i+1}|_L$ (containing it),
 or $q_{\beta_i}|_{L(C_i)}$ is anisotropic. In the latter case, $q_{i+1}|_L$ is isotropic $\Leftrightarrow$ $q_{i+1}|_{L(C_i)}$ is isotropic (as extension is purely transcendental) $\Leftrightarrow$ $p_{L(C_i)}\subset q_{\beta_i}|_{L(C_i)}$. By the Pfister Representation theorem, this is equivalent to: $q_{\beta_i}\perp -\la p(X)\ra$ is isotropic over
 $L(C_i)(V_p)$ (the function field of the underlying vector space of the form $p$), where $p(X)$ is the {\it generic value} of the quadratic form $p$. Note that the latter form is a (minimal) neighbour of the Pfister form
 $q_{\beta_i\cdot\{p(X)\}}$ and so, their isotropies are
 equivalent. Set $F(C_{i+1})=F(C_i)(V_p)$
 and $\beta_{i+1}=\beta_i\cdot\{p(X)\}$. Then, we get:
 $q_{i+1}|_L$ is isotropic $\Leftrightarrow$ $q_{\beta_{i+1}}|_{L(C_{i+1})}$ is isotropic.
 Note also that, by construction, $q_{i+1}|_{F(C_{i+1})}$ is a subform of $q_{\beta_{i+1}}$ (for example, because the former represents $1$ and the latter becomes split
 over its function field). The induction step is proven.
 
 It remains to set $F(C)=F(C_r)$ and $\alpha=\beta_r$.
 The Claim is proven.
  \Qed
 \end{proof}
 
 Combining Claims \ref{cl-first} and \ref{cl-second}, we
 get a purely transcendental extension $k(A)=k(B)(C)$ and a pure symbol $\alpha\in K^M_*(k(A))/2$, such that, for any field extension $L/k$,
 $$
 X_L\text{ is }2-\text{isotropic }\Leftrightarrow
 \,\,\,\alpha_{L(A)}=0.
 $$
 Theorem \ref{Main} is proven.
 \Qed
\end{proof}

\begin{rem}
 Note that there are no restrictions on the projective variety $X$ in the Theorem, in particular, it may be singular, reducible, or disconnected.
\end{rem}

At the first glance, the methods used in both steps are quite different: while Step 1 is done with the help of algebraic cycles, Step 2 uses Algebraic Theory of Quadratic Forms and the Representation Theorem
of Pfister as the main tool. But, in reality, Step 2 can be done using the methods of Step 1, that is, the Theorem of Colliot-Th\'el\`ene and Levine. Moreover,
the Pfister Theorem itself can be obtained this way - this will be done in the next section.

\begin{rem}
 The analogue of the Main Theorem should also hold for odd primes.    
\end{rem}

\section{The Representation Theorem of Pfister and algebraic cycles}
 \label{three}

The following theorem was proven by Pfister as a consequence of the Cassels-Pfister theorem.

\begin{thm} {\rm (\cite{Pf})}
 \label{PRT}
 Let $char(k)\neq 2$. An anisotropic form $q$ contains the form $p$ as a subform if and only if it represents
 the generic value of it.
\end{thm}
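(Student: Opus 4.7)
The plan is to prove the non-trivial direction of Theorem \ref{PRT}: assuming $q$ is anisotropic and represents the generic value $p(X)$ over $k(V_p)$, produce a subform embedding of $p$ into $q$. The reverse direction is immediate, since a subform embedding already gives a linear representation over $k$.

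I would introduce the affine variety $Y \subset \Hom(V_p, V_q) = \aaa^{mn}$ consisting of linear maps $\phi$ with $\phi^{\ast}q = p$ (as quadratic forms on $V_p$); its $k$-points are precisely the subform embeddings. It is cut out by $\binom{m+1}{2}$ inhomogeneous quadratic equations, one per coefficient of $\phi^{\ast}q - p \in \op{Sym}^2 V_p^{\ast}$. Homogenising by an auxiliary variable $t$, the quadrics $\widetilde{E}_{ij}(\phi,t) = b_q(\phi e_i, \phi e_j) - p_{ij}t^2$ define a closed subscheme $\overline{Y}\subset\pp^{mn}$; a point at infinity ($t=0$) would force the image of $\phi$ to be totally isotropic in $V_q$, which anisotropy of $q$ rules out. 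Hence $\overline{Y} = Y$ is projective, and is exactly the base set of the linear system of quadrics on $\pp^{mn}$ spanned by the $\widetilde{E}_{ij}$, parametrised by $\pp^M$ with $M = \binom{m+1}{2}-1$.

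By Theorem \ref{CT-L-thm}, it suffices to show the hypothesis forces the generic member $Q_{\eta}$ of this linear system to be $2$-isotropic over $k(\pp^M)$. A direct computation identifies $Q_{\eta}$, viewed on $\Hom(V_p,V_q) \oplus k$, with the form $q\otimes\tilde{\lambda} \perp \la -\la\lambda,p\ra\ra$, where $\tilde{\lambda}$ is the universal symmetric bilinear form on $V_p^{\ast}$. Diagonalising $\tilde{\lambda}\cong \la d_1,\ldots,d_m\ra$ by a change-of-basis matrix over $k(\pp^M)$ whose columns give vectors $\beta_j \in V_p(k(\pp^M))$, the pairing reduces to $\la\lambda,p\ra = \sum_j d_j p(\beta_j)$. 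The hypothesis provides a rational map $v:V_p \dashrightarrow V_q$ with $q\circ v = p$; specialising (generic $\beta_j$ lies outside the indeterminacy locus of $v$), the values $\phi_j := v(\beta_j) \in V_q(k(\pp^M))$ satisfy $q(\phi_j)=p(\beta_j)$, and $(\phi_1,\ldots,\phi_m,1)$ is therefore an isotropic vector of $Q_{\eta}$.

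Theorem \ref{CT-L-thm} now yields a closed point of $Y$ over some finite odd-degree extension $L/k$, i.e., a subform embedding $p_L \hookrightarrow q_L$. This descends to a subform embedding over $k$ by Springer's theorem, which preserves the anisotropic kernel (and hence the Witt index) of a quadratic form under odd-degree extensions, combined with the standard equivalence that for anisotropic $p, q$ the existence of a subform embedding is encoded by the Witt index of $q\perp -p$ being at least $\dim p$. I expect the main obstacle to be the explicit identification of $Q_{\eta}$ and the clean execution of the specialisation producing the isotropic vector; the degenerate case where $p$ is itself isotropic is handled automatically, since a rank argument then forces $Y=\emptyset$, and the same construction produces an isotropic vector for $Q_{\eta}$ which (by Theorem \ref{CT-L-thm}) rules out the hypothesis.
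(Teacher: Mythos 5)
Your route is genuinely different from the paper's. The paper argues by induction on an already-embedded codimension-one subform, at each step gluing two quadrics $Q\cup_{R'}R$ along a common subquadric and letting the ``angle'' vary, so that this singular union is the base set of a small linear system whose generic member is $q\perp-\la p(X)\ra$; Theorem \ref{CT-L-thm} then adds one dimension at a time. You instead work in one shot with the variety $Y$ of isometric embeddings of $p$ into $q$, realised after homogenising as the base set of the linear system spanned by the quadrics $b_q(\phi e_i,\phi e_j)-p_{ij}t^2$ on $\pp^{mn}$, identify the generic member as $q\otimes\lambda\perp\la-\op{tr}(\lambda p)\ra$ over $k(\pp^M)$ (this identification is correct), and descend an odd-degree point of $Y$ to a subform embedding over $k$ via Springer's theorem and the Witt-index criterion (also correct). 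This non-inductive argument is attractive, but as written it has two gaps.

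First, the claim $\overline{Y}=Y$ is false: the locus $t=0$ asks for a nonzero $\phi$ with totally $q$-isotropic image, which has no $k$-points but certainly has points over $\overline{k}$, so the base set does contain points at infinity. What is true, and what you actually need, is that no closed point of \emph{odd} degree lies at infinity (Springer again), so that $2$-isotropy of the base set still yields an odd-degree point of $Y$; this is easily repaired but must be said. Second, and more substantially, the specialisation step is unjustified: the vectors $\beta_j$ obtained by diagonalising the generic $\lambda$ are in general \emph{not} generic points of $V_p$. Already for $\dim p=2$, Gram--Schmidt applied to the generic symmetric matrix forces one $\beta_j$ to be proportional to a $k$-rational vector, so ``generic $\beta_j$ lies outside the indeterminacy locus of $v$'' is an assertion, not a proof; such a $\beta_j$ may meet the indeterminacy locus, or satisfy $p(\beta_j)=0$. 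The step can be repaired in two standard ways: invoke the substitution/specialisation principle (if the anisotropic $q$ represents $p(X)$ over $k(V_p)$, then over any $K/k$ it represents every nonzero value $p(\beta)$, $\beta\in V_p(K)$ --- provable by exactly the DVR specialisation of zero-cycles that the paper uses in the first paragraph of its own proof), taking $w_j=0$ whenever $p(\beta_j)=0$; or parametrise decompositions $\lambda=\sum_j d_j\beta_j\beta_j^{T}$ by independent generic $d_j$ and $\beta_j$, which makes each $\beta_j$ genuinely generic, and then descend the resulting isotropy of the generic member along the fibre (a rational, hence purely transcendental, extension of $k(\pp^M)$). With either fix your argument goes through and constitutes a genuinely different proof from the paper's.
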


Let us deduce it from the Theorem of Colliot-Th\'el\`ene and Levine. \\

\begin{proof}
 In one direction, the statement is obvious. Suppose, $q$ represents the generic value of $p$. Then it represents
 also the generic value of any subform $p'$ of it. Indeed, such a value is a special value of $p$ over the function field of $V_{p'}$. We may consider the quadratic form $q\perp -\la p(x)\ra$ giving a quadric
 fibration over $V_p$. The generic fiber of it is isotropic. But, for any projective map $Z\row\op{Spec}(S)$ over the spectrum of a DVR with generic and special
 fibers $Z_{\eta}$ and $Z_{\eps}$, we have a specialisation map $\CH_0(Z_{\eta})\row\CH_0(Z_{\eps})$
 which respects the degree. So, all the special fibers
 are isotropic as well. Hence, the generic value of $p'$
 is also represented.
 
 Using induction on subforms, we may assume that a subform $p'\subset p$ of co-dimension $1$ is also a subform of $q$.
 Let $p'=\la a_1,\ldots,a_{n-1}\ra$, $p=p'\perp\la a_n\ra$
 and $q=\la a_1,\ldots,a_{n-1},b_1,\ldots,b_m\ra$.
 Consider $r'=\la b_1,\ldots,b_m\ra$ and $r=r'\perp\la -a_n\ra$, then $p=q-r$.
 
 Consider the variety $X=Q\cup_{R'}R$ - the union of two
 quadrics $Q$ and $R$ intersecting at $R'$. It is a singular variety embedded into $\pp^N$. Since $Q$ is anisotropic, we have: $p\subset q$ $\Leftrightarrow$
 $R$ is isotropic $\Leftrightarrow$ $X$ is isotropic.
 Note that the intersection of $Q$ and $R$ is fixed, but the (multidimensional) angle between these two quadrics
 is not specified, we may vary it. This identifies $X$ as the base set of the linear system of quadrics in $\pp^N$:
 $$
 \la b_1,\ldots,b_m\ra\perp
 \begin{pmatrix}
  a_1 & 0 & . & 0 &\lambda_1 \\
  0 & a_2 & . & 0 & \lambda_2 \\
   . & . & . & . & . \\
  0 & 0 & . & a_{n-1} & \lambda_{n-1} \\
  \lambda_1 & \lambda_2 & . & \lambda_{n-1} & -a_n
 \end{pmatrix}
 $$
 Diagonalising it we get the linear system
 $q\perp -\la\text{ values of }p\ra$. The generic representative of it is $s=q\perp -\la\text{the generic value of } p\ra$. So, by the Theorem of Colliot-Th\'el\`ene and Levine, we get:
 $$
 p\subset q\hspace{2mm}\Leftrightarrow\hspace{2mm}
 X\text{ is isotropic }\Leftrightarrow\hspace{2mm}
 S\text{ is isotropic }\Leftrightarrow\hspace{2mm}
 q\text{ represents the generic value of }p.
 $$
 \Qed
\end{proof}

\section{Some consequences}
 \label{four}

\subsection{$2$-equivalence of field extensions}

The fact that pure symbols over the flexible closure of $k$ control the $2$-isotropy of projective 
varieties over $k$ implies that these also control the
$2$-equivalence classes of field extensions of $k$.
The {\it $p$-equivalence} of field extensions in characteristic zero was described in \cite[5.2]{INCHKm}. For a field of characteristic not $p$ we need to modify it a bit.

\begin{defi}
Let $E/k$ and $F/k$ be two field extensions of a field of $char\neq p$. Let 
$F=\op{colim} F_{\mu}$ of finitely generated extensions.
By Gabber's $l'$-altered desingularization \cite[Theorem 2.1]{I-T}, for any $\mu$, there exists a finite field extension $k'/k$ of degree prime to $p$ and a smooth
projective variety $P'_{\mu}$ over $k'$ with an embedding
$F_{\mu}\row k'(P'_{\mu})$ of $k$-extensions of finite
prime to $p$ degree. Then we say that $E/k\stackrel{p}{\geq}F/k$, if, for all $\mu$, $P'_{\mu}|_{E'}$ is $p$-isotropic, for all points $\op{Spec}(E')$ of 
$\op{Spec}(E\otimes_k k')$. We call extensions {\it $p$-equivalent}
$E/k\stackrel{p}{\sim}F/k$, if they are equal in the sense of this partial order.
\end{defi}

For a field extension $E/L$, denote as
$\op{P.P.}(\kker(E/L))$ the pure part of the kernel\\
$\kker(K^M_*(L)/2\row K^M_*(E)/2)$. We have:

\begin{prop}
 \label{two-equiv-ker}
 Let $char(k)\neq 2$ and $E/k$ and $F/k$ be field extensions. Then
 \begin{itemize}
  \item[$(1)$] $E/k\stackrel{2}{\geq}F/k$ $\Leftrightarrow$
$\op{P.P.}(\kker(\wt{F}/\wt{k}))\subset
\op{P.P.}(\kker(\wt{E}/\wt{k}))$;
  \item[$(2)$] $E/k\stackrel{2}{\sim}F/k$ $\Leftrightarrow$
$\op{P.P.}(\kker(\wt{F}/\wt{k}))=
\op{P.P.}(\kker(\wt{E}/\wt{k}))$.
 \end{itemize}
\end{prop}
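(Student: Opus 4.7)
The plan is to deduce Part (2) formally from Part (1) by applying (1) to the ordered pairs $(E,F)$ and $(F,E)$. So I focus on Part (1).

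The strategy is to use Theorem \ref{Main} as a dictionary between pure symbols in $K^M_*(\wt{k})/2$ and projective $k$-varieties. Concretely, for a projective $k$-variety $V$, Theorem \ref{Main} produces a pure symbol $\alpha_V\in K^M_*(k(A_V))/2\subset K^M_*(\wt{k})/2$ with the property that, for every extension $L/k$,
\[
\alpha_V|_{\wt{L}}=0\quad\Longleftrightarrow\quad V|_L\text{ is }2\text{-isotropic},
\]
using the injectivity of $K^M_*/2$ along purely transcendental extensions (so that $\alpha_V|_{L(A_V)}=0$ and $\alpha_V|_{\wt{L}}=0$ coincide). Conversely, given a pure symbol $\beta\in K^M_*(\wt{k})/2$, one realises it geometrically: $\beta$ is defined over some $k(\pp^N)\subset\wt{k}$, and one takes a projective $k$-model $V_\beta$ of the corresponding Pfister quadric family $Q_\beta\to\pp^N$; a specialisation argument for $0$-cycles along $\pp^N$ gives $V_\beta|_L$ is $2$-isotropic $\Leftrightarrow\beta|_{\wt{L}}=0$. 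Together these translations reduce Part (1) to the following purely geometric reformulation.

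\noindent\textit{Claim.} $E/k\stackrel{2}{\geq}F/k$ iff for every projective $k$-variety $V$, $2$-isotropy of $V|_F$ implies $2$-isotropy of $V|_E$.

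For the Claim, the forward direction is the substantive step. Given $V|_F$ $2$-isotropic, an odd-degree $0$-cycle descends to $V|_{F_\mu}$ for some $\mu$; composing with the prime-to-$2$ embedding $F_\mu\hookrightarrow k'(P'_\mu)$ yields $2$-isotropy of $V|_{k'(P'_\mu)}$, and spreading out produces a cycle $Z\subset V\times_k P'_\mu$ generically finite of odd degree over $P'_\mu$. The hypothesis $E/k\stackrel{2}{\geq}F/k$ delivers an odd-degree $0$-cycle on each $P'_\mu|_{E'}$, which, after being moved into the dense open locus where $Z\to P'_\mu$ is finite, pulls back through $Z$ to an odd-degree $0$-cycle on $V|_{E'}$; pushing forward along the prime-to-$2$ extension $E'/E$ then gives $2$-isotropy of $V|_E$. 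The reverse direction is obtained by applying the $V$-property to $V=P'_\mu$ viewed as a $k$-variety, noting that $P'_\mu|_F$ is $2$-isotropic via the chain $F\supset F_\mu\hookrightarrow k'(P'_\mu)$ of prime-to-$2$ extensions; unwinding through the components of $E\otimes_k k'$ using their prime-to-$2$ residue-field degrees recovers the definition of $E/k\stackrel{2}{\geq}F/k$.

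The main obstacle I anticipate is the moving/specialisation argument in the forward direction of the Claim: the $0$-cycle on $P'_\mu|_{E'}$ must be relocated into the good locus of $Z\to P'_\mu$ so that pullback through $Z$ is well-defined, and one must bookkeep the components of $E\otimes_k k'$ carefully, exploiting the fact that their residue-field degrees over $E$ are all prime to $2$. The standard moving lemma for $0$-cycles on smooth projective varieties together with degree-multiplicativity along prime-to-$2$ extensions should suffice.
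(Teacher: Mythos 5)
Your reverse direction of (1) (kernel containment $\Rightarrow$ $E/k\stackrel{2}{\geq}F/k$) is essentially the paper's argument: apply Theorem \ref{Main} to the Gabber models $P'_{\mu}$ viewed as $k$-varieties and transfer isotropy through the odd-degree chain $F_{\mu}\hookrightarrow k'(P'_{\mu})$. But your forward direction has a genuine gap. You route it through a ``converse dictionary'': given an arbitrary pure symbol $\beta\in K^M_*(\wt{k})/2$, you claim a projective $k$-variety $V_{\beta}$ (a projective $k$-model of the Pfister quadric family over $\pp^N$) with $V_{\beta}|_L$ $2$-isotropic $\Leftrightarrow\beta|_{\wt{L}}=0$ for all $L/k$. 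Theorem \ref{Main} does not give this (it goes from varieties to symbols, not back), and the construction you sketch fails: specialisation of $0$-cycles only shows that isotropy of the generic fibre forces isotropy of special fibres, whereas you need the opposite implication, and a model's degenerate fibres are typically isotropic regardless of $\beta$ (singular quadrics are isotropic, and the specialised symbol may vanish on special fibres). Concretely, for $\beta=\{t\}$ over $k(\pp^1)$ any projective model of the rank-two quadric $x^2=ty^2$ acquires a rational point on the fibre over $t=0$, so $V_{\beta}|_L$ is $2$-isotropic for every $L$, while $\beta|_{\wt{L}}\neq 0$ for every $L$. The paper avoids needing any such realisation: its $(\Rightarrow)$ direction is a direct Milnor K-theory argument --- a pure $u$ killed by $\wt{F}$ is killed by $\wt{F_{\mu}}$, hence by $\wt{E'(P'_{\mu})}$ via the odd-degree embedding $F_{\mu}\hookrightarrow k'(P'_{\mu})$; then $2$-isotropy of $P'_{\mu}|_{E'}$ kills $u$ over $\wt{E'}$, and an odd-degree transfer $E'/E$ kills it over $\wt{E}$. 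Your proof would be complete only if you either supply the missing realisation of symbols by $k$-varieties (a nontrivial statement not in the paper) or replace that step by this kind of transfer/specialisation argument.

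A secondary, smaller error: you assert that the residue-field degrees $[E':E]$ of \emph{all} points of $\op{Spec}(E\otimes_k k')$ are prime to $2$. Only their sum $[k':k]$ is odd, so one can merely guarantee \emph{some} point with $[E':E]$ odd (which is what the paper uses). In the forward direction of your Claim this is harmless, since the pushforward only needs one odd-degree point; but in the reverse direction, where the definition of $E/k\stackrel{2}{\geq}F/k$ demands isotropy of $P'_{\mu}|_{E'}$ at \emph{every} point $E'$, your ``unwinding'' leans on the false claim and should be argued more carefully.
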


\begin{proof}
 It is enough to prove (1). ($\Rightarrow$) If $u\in 
\op{P.P.}(\kker(\wt{F}/\wt{k}))$, where $F=\op{colim}F_{\mu}$, then, for some $\mu$, $u\in 
\op{P.P.}(\kker(\wt{F_{\mu}}/\wt{k}))$. 
By Gabber's $l'$-altered desingularization, we have an
extension $k'(P'_{\mu})/F_{\mu}$ of odd degree,
where $P'_{\mu}$ is a smooth projective variety over $k'$
and $k'$, in turn, is an extension of $k$ of odd degree. 
Our condition $E/k\stackrel{2}{\geq}F/k$ implies that $P'_{\mu}|_{E'}$ is $2$-isotropic, for any point $\op{Spec}(E')$ of $\op{Spec}(E\otimes_k k')$. Since $[k':k]$ is odd, among these points there will be some, for which $[E':E]$ is odd as well. As $u\in 
\op{P.P.}(\kker(\wt{F_{\mu}}/\wt{k}))$, it also belongs to
$\op{P.P.}(\kker(\wt{E'}(P'_{\mu})/\wt{k}))$, and since
$P'_{\mu}|_{E'}$ is $2$-isotropic, it belongs to
$\op{P.P.}(\kker(\wt{E'}/\wt{k}))$. Since $[E':E]$ is odd, we have: $u\in\op{P.P.}(\kker(\wt{E}/\wt{k}))$.

($\Leftarrow$) Let $F=\op{colim}F_{\mu}$.
As above, for each $\mu$, we have a finite extension
$k'/k$ of odd degree and a smooth variety $P'_{\mu}$ over $k'$ with an embedding of $k$-extensions $F_{\mu}\row k'(P'_{\mu})$ of odd degree. Let $P_{\mu}$ be the $k$-variety $P'_{\mu}\row\op{Spec}(k')\row\op{Spec}(k)$.
By Theorem \ref{Main}, there is a pure symbol $\alpha\in K^M_*(\wt{k})/2$, such that the $2$-isotropy of $P_{\mu}$ is equivalent to the 
triviality of $\alpha$. Since $P_{\mu}|_{k'(P'_{\mu})}$ is $2$-isotropic, so is $P_{\mu}|_F$. Thus,
$\alpha_{\wt{F}}=0$. By our condition, $\alpha_{\wt{E}}=0$ as well. Then $P_{\mu}|_{E}$ is $2$-isotropic.
This implies that $P'_{\mu}|_{E'}$ is $2$-isotropic, for any point $\op{Spec}(E')$ of $\op{Spec}(E\otimes_k k')$.
Thus, $E/k\stackrel{2}{\geq}F/k$.
 \Qed
\end{proof}

Over a field of characteristic zero, the $2$-equivalence classes of extensions of the base field parametrize the
{\it isotropic} points of characteristic $2$ of the Balmer spectrum $\op{Spc}(DM_{gm}(k))$ \cite{Bal-1} of the Voevodsky motivic category - see \cite[Theorem 5.13]{INCHKm}. 
The above result shows that these points may be distinguished with the help of pure symbols over the flexible closure of the base field.
Moreover, using our Main Theorem (for a field $k$ of $char=0$) one can show:

\begin{thm} {\rm (\cite[Theorem 3.7]{BSPS})}
All isotropic points ${\frak{a}}_{2,E}$ of characteristic $2$ of the Balmer spectrum $\op{Spc}(DM_{gm}(k))$ are
closed. In particular, there are no specialisation relations among such points. 
\end{thm}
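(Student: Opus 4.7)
The plan is to show that ${\frak{a}}_{2,E}$ is a minimal prime in $\op{Spc}(DM_{gm}(k))$, since in Balmer's convention $\overline{\{{\frak{p}}\}}=\{{\frak{q}}:{\frak{q}}\subseteq{\frak{p}}\}$, so closed points coincide with minimal primes. Equivalently, one wants the residue tt-category $DM_{gm}(k)/{\frak{a}}_{2,E}$---which by \cite[Theorem 5.13]{INCHKm} is the isotropic category at $\wt E$---to be a tt-field, i.e., to have a single-point Balmer spectrum. Theorem~\ref{Main} is exactly the tool that reduces this rigidity question to the corresponding, well-controlled statement for pure symbols in $K^M_*/2$.

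Concretely, a strict prime ${\frak{q}}\subsetneq{\frak{a}}_{2,E}$ would be witnessed by a motive in ${\frak{a}}_{2,E}\setminus{\frak{q}}$, and by generation of $DM_{gm}(k)$ one may take it to be the motive of a smooth projective $X/k$. Theorem~\ref{Main} associates to $X$ a pure symbol $\alpha\in K^M_*(\wt k)/2$ whose vanishing over $\wt E$ controls the $2$-isotropy of $X_E$; by the norm-variety property, the Pfister quadric $Q_\alpha$ has exactly the same $2$-isotropy pattern as $X$ across all extensions of $k$. One then argues that in the isotropic category of $\wt E$, $[X]$ and the Rost motive of $Q_\alpha$ generate the same tensor ideal: since Rost motives are indecomposable and rigid at $2$, the resulting tensor-ideal structure is completely controlled by the single pure symbol $\alpha$, leaving no room for a further non-trivial prime inside ${\frak{a}}_{2,E}$. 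The ``in particular'' about specialisations among isotropic points then follows via Proposition~\ref{two-equiv-ker}: a specialisation ${\frak{a}}_{2,E}\rightsquigarrow{\frak{a}}_{2,F}$ would give ${\frak{a}}_{2,F}\subsetneq{\frak{a}}_{2,E}$, contradicting the minimality just established.

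The main obstacle is the rigidity step: showing that in the isotropic/residue category, the Rost motive of $\alpha$ captures \emph{all} the tensor-prime information of $X$. This requires combining the Voevodsky--Rost norm-variety formalism with the structural description of isotropic categories from \cite{INCHKm}, and it is the step at which Theorem~\ref{Main} enters non-trivially. Once that rigidity is in place, Theorem~\ref{Main} converts the tensor-prime question about arbitrary motives into the statement that pure symbols completely determine the prime structure at an isotropic point, which is precisely the tt-field property we need.
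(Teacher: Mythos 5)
First, a point of comparison: the paper does not prove this statement at all --- it is quoted from \cite[Theorem 3.7]{BSPS}, and the present article only supplies the inputs (Theorem \ref{Main}, Proposition \ref{two-equiv-ker}). So there is no internal proof to measure you against; your sketch has to stand on its own, and as written it has genuine gaps. The first is directional: you correctly recall that in Balmer's topology $\overline{\{\frak{p}\}}=\{\frak{q}:\frak{q}\subseteq\frak{p}\}$, so closed points are \emph{minimal} primes; but by \cite[Prop.~3.11]{Bal-1} the spectrum of the Verdier quotient $\op{DM}_{gm}(k)/\frak{a}_{2,E}$ is the subspace of primes \emph{containing} $\frak{a}_{2,E}$. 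Hence ``the residue category is a tt-field'' would prove that $\frak{a}_{2,E}$ is \emph{maximal}, not minimal; the quotient simply cannot see the primes strictly inside $\frak{a}_{2,E}$, which are exactly what closedness requires you to exclude. (Also, $\op{DM}_{gm}(k)/\frak{a}_{2,E}$ is not the isotropic category at $\wt{E}$: \cite[Theorem 5.13]{INCHKm} exhibits $\frak{a}_{2,E}$ as the kernel of a functor to the isotropic category of $\wt{E}$ with $\ff_2$-coefficients, so at best the quotient embeds into it, over a different base and with different coefficients.)

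Second, the witness step is unjustified: from $\frak{q}\subsetneq\frak{a}_{2,E}$ you take an object of $\frak{a}_{2,E}\setminus\frak{q}$ and replace it by the motive of a smooth projective variety ``by generation''; generation of $\op{DM}_{gm}(k)$ by motives of varieties does not give this --- you would need that $\frak{a}_{2,E}$ is generated \emph{as a thick tensor ideal} by the motives of varieties it contains, which is not established. Third, and most seriously, the core ``rigidity'' claim --- that $M(X)$ and the Rost motive of $Q_\alpha$ generate the same tensor ideal because $X$ and $Q_\alpha$ have the same $2$-isotropy pattern over all extensions, and that this ``leaves no room'' for a prime strictly inside $\frak{a}_{2,E}$ --- is asserted rather than argued, and you yourself flag it as the main obstacle. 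Identical isotropy behaviour over all field extensions does not formally imply equality of the generated thick tensor ideals; establishing a statement of that kind is precisely the hard content. What Theorem \ref{Main} and Proposition \ref{two-equiv-ker} do give directly is control, via pure symbols, of inclusions among the isotropic points $\frak{a}_{2,E}$ themselves (hence of possible specialisations among them); ruling out \emph{arbitrary} primes strictly contained in $\frak{a}_{2,E}$ needs an additional argument, and that is exactly what is carried out in \cite{BSPS}, not in the text above nor in your sketch.
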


\subsection{Numerical equivalence of cycles}

Our main result also implies that numerical triviality of cycles (mod 2) is controlled by symbols over flexible closure. Here we will assume that characteristic of our field is zero.

When restricted to a larger field, a numerically trivial cycle may stop to be numerically trivial
(as new cycles may be defined there). The following result permits to control this process.

\begin{thm}
 \label{num-triv-symb}
 Let $X$ be a smooth projective variety over $k$ and $u\in\op{Ch}^*(X)$ be a numerically trivial cycle $(mod\,2)$. Then there exists a purely transcendental extension $k(A)/k$ and
a pure symbol $\alpha\in K^M_*(k(A))/2$, such that, for all $L/k$,
$u_L$ is numerically trivial $\Leftrightarrow$ $\alpha_{L(A)}\neq 0$.
\end{thm}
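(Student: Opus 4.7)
The plan is to deduce the theorem by combining Theorem \ref{Main} with the main result of \cite{INCHKm}, which re-expresses numerical non-triviality of cycles mod $2$ as a $2$-isotropy condition on a projective variety.

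First, I would invoke the main theorem of \cite{INCHKm}: this result attaches to the numerically trivial cycle $u\in\op{Ch}^*(X)$ a projective variety $Y$ over $k$ with the property that, for every field extension $L/k$,
$$
u_L\ \text{is not numerically trivial (mod $2$)}\iff Y_L\ \text{is $2$-isotropic}.
$$
Morally, $Y$ can be thought of as a disjoint union of components of a Hilbert/Chow scheme parameterising cycles of complementary dimension on $X$, restricted to those components on which the intersection pairing with $u$ is odd; the existence of a cycle over an odd-degree extension $E'/E$ witnessing numerical non-triviality of $u_E$ then corresponds to $2$-isotropy of $Y_E$. The hypothesis that $u$ is already numerically trivial over $k$ means $Y_k$ itself is anisotropic.

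Next, I would feed this $Y/k$ into Theorem \ref{Main}. This yields a purely transcendental extension $k(A)/k$ and a pure symbol $\alpha\in K^M_*(k(A))/2$ such that, for every $L/k$,
$$
\alpha|_{L(A)}=0\iff Y_L\ \text{is $2$-isotropic}.
$$
The remark following Theorem \ref{Main}, allowing $Y$ to be singular, reducible, or disconnected, is essential here, because the $Y$ produced by \cite{INCHKm} will in general be a disjoint union of Hilbert scheme components rather than a smooth irreducible variety.

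Chaining the two equivalences gives
$$
u_L\ \text{is numerically trivial (mod $2$)}\iff Y_L\ \text{is not $2$-isotropic}\iff \alpha|_{L(A)}\neq 0,
$$
which is exactly the claim. The hard part of the argument is really the first step: identifying the precise statement from \cite{INCHKm} and verifying that the variety $Y$ it produces falls within the generality permitted by Theorem \ref{Main}. Once that is in place, Theorem \ref{num-triv-symb} is essentially a one-line corollary, and Corollary \ref{Chow-num-triv-symb} should follow from it by specialising from $K^M_*/2$-valued to $\op{Ch}^*$-valued cycle classes.
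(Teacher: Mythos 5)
Your overall strategy (translate numerical triviality of $u_L$ into (an)isotropy of an auxiliary variety via \cite{INCHKm}, then apply Theorem \ref{Main} and chain the equivalences) is the same as the paper's, but the input you rely on does not exist in the form you state it, and supplying a usable version is precisely the content of the proof. There is no theorem in \cite{INCHKm} attaching to $u$ a projective variety $Y$ over $k$ itself with ``$u_L$ numerically non-trivial $\Leftrightarrow$ $Y_L$ is $2$-isotropic'' for all $L/k$; the results there concern flexible fields, i.e.\ are only available after a purely transcendental extension. What the paper actually uses is \cite[Theorem 4.12]{INCHKm}: after a purely transcendental extension $k'/k$, the class $u_{k'}$, viewed as a morphism $T(d)[2d]\row M(X)$ in $\op{DM}(k;\ff_2)$, factors through the motive $M(Z')$ of a smooth projective \emph{anisotropic} variety $Z'$ over $k'$. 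The two-way equivalence ``$u_F$ numerically trivial $\Leftrightarrow$ $Z'_{F'}$ anisotropic'' (with $F'=k'*_k F$) is then not automatic: one direction comes from the proof of \cite[Theorem 4.8]{INCHKm}, the other from the factorization (anisotropy of $Z'_{F'}$ forces $u_{F'}$ to be anisotropic, hence numerically trivial) together with descent along the purely transcendental extension $F'/F$. Only then is Theorem \ref{Main} applied, to $Z'$ over $k'$, and since purely transcendental extensions compose one gets the symbol over some $k(A)$. So the gap is repairable exactly along these lines; note also that this route produces a smooth irreducible $Z'$, so the extra generality of Theorem \ref{Main} for singular or disconnected varieties, which you lean on, is not actually needed.

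Your heuristic construction of $Y$ as a union of Chow/Hilbert components pairing oddly with $u$ cannot serve as a substitute. The cycles witnessing numerical non-triviality of $u_L$ have no a priori degree bound as $L$ varies, so $Y$ would have to be an infinite disjoint union of projective varieties; that is not a projective variety and falls outside Theorem \ref{Main}, whose proof embeds the given variety into a single $\pp^N$ as the base locus of a linear system of quadrics. Moreover, the existence of such a $Y$ over $k$ itself would be a strictly stronger statement than what \cite{INCHKm} provides (isotropic detection of numerical triviality is only established over flexible fields), which is exactly why the paper works over the purely transcendental extension $k'$ and why the symbol in Theorem \ref{num-triv-symb} is only defined over $k(A)$ rather than over $k$.
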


\begin{proof}
We can identify $u$ with a map $T(d)[2d]\row M(X)$ in $\dmkF{\ff_2}$.  
By \cite[Theorem 4.12]{INCHKm}, there exists a purely transcendental extension $k'/k$
and a smooth projective anisotropic variety $Z'$ (of dimension $d$) over $k'$, such that $u_{k'}$ decomposes as $T(d)[2d]\row M(Z')\row M(X)_{k'}$. Moreover, the proof of \cite[Theorem 4.8]{INCHKm} shows that, if $F/k$ is such that $u_F$ is still numerically trivial, then
$Z'_{F'}$ is still anisotropic, where $F'=k'*_k F$. Conversely, if $Z'_{F'}$ is anisotropic, then 
the class $u_{F'}$ is anisotropic and thus, numerically trivial. Then so is $u_F$ (as the extension $F'/F$ is purely transcendental). Thus, $u_F$ is numerically trivial $\Leftrightarrow$ $Z'_{F'}$ is anisotropic. 

Now, by Theorem \ref{Main}, there exists a purely transcendental extension $k''/k'$ and
a pure symbol $\alpha\in K^M_*(k'')/2$, such that, for any field extension $L'/k'$, 
$Z'_{L'}$ is isotropic $\Leftrightarrow$ $\alpha_{L''}=0$, where $L''=L'*_{k'}k''$.
Denote $k''=k(A)$. Then, for any field extension $L/k$, 
$$
u_L\,\,\text{is numerically trivial}\,\,\Leftrightarrow Z'_{L'}\,\,\text{is anisotropic}\,\,
\Leftrightarrow\alpha_{L(A)}\neq 0.
$$
 \Qed
\end{proof}

Additionally, we can see that our numerically trivial class $u$ is killed when restricted to the
{\it reduced Rost motive} $\wt{M}_{\alpha}$ (see \cite[Theorem 3.5]{Iso}).

\begin{prop}
 \label{nt-rRm}
 Let $u\in\op{Ch}^*(X)$ be a numerically trivial class $(mod\, 2)$ and $\alpha\in K^M_*(k(A))/2$ be the respective pure symbol from Theorem \ref{num-triv-symb}.
Then $u_{k(A)}\otimes id_{\wt{M}_{\alpha}}=0$.
\end{prop}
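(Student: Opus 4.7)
The plan is to exploit the factorization constructed in the proof of Theorem \ref{num-triv-symb}: there $u_{k(A)}$ is written as
$T(d)[2d]\stackrel{f}{\to} M(Z')_{k(A)}\stackrel{g}{\to} M(X)_{k(A)}$,
where $Z'$ is a smooth projective anisotropic variety of dimension $d$ (over a purely transcendental subextension of $k(A)/k$) whose $2$-isotropy is controlled by $\alpha$, and $f$ is essentially the fundamental-class map of $Z'$. It therefore suffices to show that the composition $(g\otimes id_{\wt{M}_{\alpha}})\circ(f\otimes id_{\wt{M}_{\alpha}})$ vanishes.

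The structural input is the identification, via \cite[Theorem 3.5]{Iso}, between $\wt{M}_{\alpha}$ and the reduced upper motive of $Z'$ over $k(A)$---both being governed by the same pure symbol. Under this identification $M(Z')_{k(A)}\otimes\wt{M}_{\alpha}$ admits a motivic decomposition that localises the image of $f\otimes id_{\wt{M}_{\alpha}}$ in a controlled summand, and the post-composition with $g\otimes id_{\wt{M}_{\alpha}}$ is then a pairing of the class $u_{k(A)}$ against a canonical class coming from $\wt{M}_{\alpha}$.

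The heart of the argument, and the main obstacle, is to convert the numerical triviality of $u$ into literal vanishing of this pairing. I would argue this through the isotropic motivic viewpoint of \cite{INCHKm}: tensoring with $\wt{M}_{\alpha}$ realises the projection of $\dmgmkF{\ff_2}$ to the isotropic localisation at the closed point ${\frak{a}}_{2,\alpha}$ of $\op{Spc}(\dmgmk)$ associated to $\alpha$, and in this localisation Chow groups coincide with Chow modulo numerical equivalence with $\ff_2$-coefficients; the numerical triviality of $u$ then directly forces $u_{k(A)}\otimes id_{\wt{M}_{\alpha}}=0$. A more hands-on alternative uses Rost nilpotence for $\wt{M}_{\alpha}$: over $L=k(A)(Z')$ the variety $Z'_L$ becomes isotropic, so $f_L$ factors through a lower-dimensional cell of $M(Z'_L)$, forcing $g_L\circ f_L$ to land in a smaller-codimension piece of $\op{Ch}^*(X_L)$ which numerical triviality of $u$ kills; nilpotence for $\wt{M}_{\alpha}$ then lifts this vanishing back from $L$ to $k(A)$.
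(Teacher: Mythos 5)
There is a genuine gap: you start from the right factorization $T(d)[2d]\to M(Z')_{k(A)}\to M(X)_{k(A)}$, but you never make the one observation that actually finishes the proof, namely that the \emph{whole middle term dies}: $M(Z')_{k(A)}\otimes\wt{M}_{\alpha}=0$. This is immediate from the construction, because $Z'$ is tautologically $2$-isotropic over the residue field $L'$ of any of its own points, so by the defining property of $\alpha$ (Theorem \ref{num-triv-symb}/Theorem \ref{Main}) the symbol $\alpha$ vanishes over the corresponding extension $L''$, and the reduced Rost motive vanishes simultaneously with its symbol; hence $M(Z')\otimes\wt{M}_{\alpha}$ vanishes and $u_{k(A)}\otimes id_{\wt{M}_{\alpha}}$, factoring through it, is zero. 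No further use of the numerical triviality of $u$, no pairing, and no identification of $\wt{M}_{\alpha}$ with an upper motive of $Z'$ is needed.

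The substitutes you propose do not close this hole. First, the ``structural input'' that $\wt{M}_{\alpha}$ is the reduced upper motive of $Z'$ is unjustified: the paper only relates the $2$-isotropy of $Z'$ to the triviality of $\alpha$, which is far weaker than a motivic isomorphism, and such an isomorphism is neither claimed nor required. Second, tensoring with $\wt{M}_{\alpha}$ is not the isotropic localisation of \cite{INCHKm}: that localisation is a Verdier quotient attached to field extensions (the points ${\frak{a}}_{2,E}$), not multiplication by a fixed reduced Rost motive, and even granting the identification of isotropic with numerical equivalence over flexible fields, vanishing of $u$ in a quotient category does not by itself give the literal vanishing of $u_{k(A)}\otimes id_{\wt{M}_{\alpha}}$ in $\op{DM}$ --- so this branch of your argument is close to assuming what has to be proved. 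Third, the ``Rost nilpotence'' alternative does not work as stated: knowing that a morphism $T(d)[2d]\otimes\wt{M}_{\alpha}\to M(X)\otimes\wt{M}_{\alpha}$ vanishes after restriction to $L=k(A)(Z')$ does not allow you to descend that vanishing to $k(A)$; nilpotence-type results control idempotents and isomorphisms modulo nilpotents, not injectivity of restriction on such Hom-groups. The efficient route is exactly the pointwise vanishing of $\alpha$ over $Z'$ and the resulting vanishing of $M(Z')\otimes\wt{M}_{\alpha}$.
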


\begin{proof}
 From the proof of Theorem \ref{num-triv-symb} we have purely transcendental extension
$k'/k$ and a smooth projective variety $Z'$ over $k'$, so that $u_{k'}$ factors as
$T(d)[2d]\row M(Z')\row M(X)_{k'}$. Since $Z'$ is isotropic over every point $\op{Spec}(L')$ of itself, $\alpha_{L''}$ is zero, for every such point (where $L''=L'*_{k'}k''$). Thus,
$M(Z')_{k''}\otimes\wt{M}_{\alpha}=0$ (note that the reduced Rost motive vanishes simultaneously with $\alpha$). Hence, $u_{k''}\otimes id_{\wt{M}_{\alpha}}=0$.
 \Qed
\end{proof}

From Theorem \ref{num-triv-symb} and Proposition \ref{nt-rRm} we immediately get:

\begin{cor}
 \label{Chow-num-triv-symb}
 Let $N\in Chow(k;\ff_2)$ be numerically trivial. Then there exists
a purely transcendental extension $k(A)/k$ and a pure symbol $\alpha\in K^M_*(k(A))/2$,
such that, for any field extension $L/k$, the motive $N_L$ is numerically trivial
$\Leftrightarrow$ $\alpha_{L(A)}\neq 0$. 
Moreover, $N_{k(A)}\otimes\wt{M}_{\alpha}=0$.
\end{cor}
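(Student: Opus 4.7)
The plan is to represent $N$ by an idempotent correspondence and reduce to Theorem \ref{num-triv-symb} and Proposition \ref{nt-rRm} applied to that idempotent. Write $N=(X,e,n)$ with $X$ a smooth projective $k$-variety, $n\in\zz$, and $e\in\op{Ch}^{\ddim X}(X\times X)$ an idempotent correspondence (every object of $Chow(k;\ff_2)$ admits such a presentation). The content of ``$N$ is numerically trivial'' is that the projector $e$, viewed as a cycle class on $X\times X$ with $\ff_2$-coefficients, is numerically trivial, and this equivalence is stable under base change: $N_L$ is a numerically trivial motive if and only if $e_L\in\op{Ch}^*(X_L\times X_L)$ is a numerically trivial cycle.

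Given this reduction, apply Theorem \ref{num-triv-symb} to the numerically trivial cycle $e$ on the smooth projective variety $X\times X$ over $k$. This produces a purely transcendental extension $k(A)/k$ and a pure symbol $\alpha\in K^M_*(k(A))/2$ such that, for every $L/k$, the restriction $e_L$ is numerically trivial if and only if $\alpha_{L(A)}\neq 0$. Combined with the previous paragraph, this gives exactly the first assertion of the corollary.

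For the ``moreover'' part, apply Proposition \ref{nt-rRm} to the numerically trivial cycle $e$ with the same symbol $\alpha$: this yields $e_{k(A)}\otimes id_{\wt{M}_{\alpha}}=0$ as a morphism in the motivic category. But $N_{k(A)}\otimes\wt{M}_{\alpha}$ is, by construction, the image of the idempotent $e_{k(A)}\otimes id_{\wt{M}_{\alpha}}$ acting on $M(X)_{k(A)}(n)[2n]\otimes\wt{M}_{\alpha}$, so it must vanish.

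The only point that requires some care is the identification between the numerical triviality of a Chow motive $N$ and the numerical triviality of a projector presenting it; I expect this to be the main (minor) obstacle, but it is essentially a matter of unwinding the definition of numerical equivalence in $Chow(k;\ff_2)$, after which the result is a direct combination of the two preceding statements, as the corollary itself advertises.
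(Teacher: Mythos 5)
Your proof is correct and follows exactly the route the paper intends: the corollary is stated as an immediate consequence of Theorem \ref{num-triv-symb} and Proposition \ref{nt-rRm}, and your presentation of $N$ as the image of a numerically trivial projector $e$ on $X\times X$ (with numerical triviality of $N_L$ equivalent to that of the cycle $e_L$, compatibly with base change) is precisely the standard unwinding that makes this immediate. The only glossed point, identifying the vanishing of $e_{k(A)}\otimes id_{\wt{M}_{\alpha}}$ as a cycle-class map with its vanishing as an endomorphism of $M(X)(n)[2n]\otimes\wt{M}_{\alpha}$, is a routine duality compatibility and does not affect correctness.
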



\begin{thebibliography}{99} 
\bibitem{Bal-1}  
  P.\,Balmer, {\it The spectrum of prime ideals in tensor triangulated categories}, J. Reine Angew. Math. {\bf 588} (2005), 149-168.
\bibitem{C-TL}
  J.-L.\,Colliot-Th\'el\`ene, M.\,Levine, {\it Une version du th\'eor\`eme d'Amer et Brumer pour les z\'ero-cycles}, Preprint;
  available at: {\ttfamily arxiv.org/abs/0911.4644}
\bibitem{I-T}
  L.\,Illusie, M.\,Temkin, {\it Expos\'e X. Gabber's modification theorem (log smooth case)}, in {\sl Travaux de Gabber sur l'uniformisation Locale et la Cohomologie \'Etale Sch\'emas Quasi-Excellents}, Ast\'erisque {\bf 363-364} (2014), 167-212.
\bibitem{MeNRS2}
    A.S.Merkurjev,\ {\it On the norm residue symbol of degree $2$.} 
    Soviet Math. Dokl. {\bf 24} (1981), 546-551.
\bibitem{Pf}
  A.\,Pfister, {\it Multiplikative quadratishe Formen}, Arch. Math. {\bf 16} (1965), 363-370.
\bibitem{RoNVAC}
		M.\,Rost, {\it Norm varieties and algebraic cobordism}, Proceedings of the ICM, Bejing 2002, vol. 2, 77-86.
\bibitem{kerM}
  A.\,Vishik, {\it On the kernels in Milnor's K-theory under function field extensions}, MPIM Preprint, 1998 (32); available at:
   {\ttfamily www.mpim-bonn.mpg.de/node/263}
\bibitem{Iso}
  A.\,Vishik, {\it Isotropic motives}, J. Inst. Math. Jussieu, {\bf 21} (2022), no. 4, 1271-1330.
\bibitem{INCHKm}
  A.\,Vishik, {\it Isotropic and numerical equivalence for Chow groups and Morava K-theories}, Inventiones Math {\bf 237} (2024), 779-808. {\ttfamily  https://doi.org/10.1007/s00222-024-01267-z}.
\bibitem{BSPS}
  A.\,Vishik, {\it The Balmer spectrum of Voevodsky motives and pure symbols}, Preprint, August 2025.
\bibitem{VoMil}
  V.\,Voevodsky, {\it Motivic cohomology with $\zz/2$-coefficients},
   Publ. Math. IHES {\bf 98} (2003), 59-104.
\bibitem{VoZl}
  V.\,Voevodsky, {\it On motivic cohomology with $\zz/l$-coefficients}, Annals of Math. {\bf 174} (2011), 401-438.
\end{thebibliography}
\end{document}